\newcommand{\QQ}{{\mathbb Q}}
\newcommand{\ZZ}{{\mathbb Z}}
\newcommand{\NN}{\mathbb N}
\newtheorem{theorem}{Theorem}[section]
\newtheorem{lem}[theorem]{Lemma}
\newtheorem{prop}[theorem]{Proposition}
\newtheorem{definition}[theorem]{Definition}
\newtheorem{example}[theorem]{Example}
\begin{document}

\title{Pythagorean Triples in the Fibonacci Model Set}

\author{Sarah Marklund}
\address{Department of Mathematics and Statistics,
         MacEwan University, \newline
\hspace*{\parindent}10700  104 Avenue,
         Edmonton, AB, Canada, T5J 4S2}
\email{marklunds2@mymacewan.ca}

\author{Evangeline Tweddle}
\address{Department of Mathematics and Statistics,
         MacEwan University, \newline
\hspace*{\parindent}10700  104 Avenue,
         Edmonton, AB, Canada, T5J 4S2}
\email{tweddlee2@mymacewan.ca}

\begin{abstract}
In this paper we give a description of all Pythagorean triples in the ring ${\ZZ}[\tau]$. We also consider triples in the Fibonacci model set which satisfy the Diophantine equations arising from Fermat's Last Theorem. Examples are provided, including a counterexample to Fermat's Last Theorem for the third degree in the Fibonacci model set. 
\end{abstract}

\keywords{Pythagoras' equation, Fibonacci model set, Fermat's Last theorem}

\subjclass[2020]{Primary: 52C23; Secondary: 11D09, 11D41}

\maketitle

\section{Introduction}
Quasicrystals were first discovered in 1982 \cite{Shecht}. Since then, model sets have been utilized as a way to represent and study them, as well as having applications in other studies of aperiodicity \cite{Moody}. The Fibonacci model set is one of the more easily understood and deeply studied model sets \cite{Fib2}, \cite {Fib1}.

In this paper we study solutions to Pythagoras' equation,   $a^2 + b^2 = c^2$, in ${\ZZ}[\tau]$ and the Fibonacci model set. Quite similarly, counterexamples to Fermat’s Last Theorem, proven by Andrew Wiles \cite{FLT}, which states that the equation $a^n + b^n = c^n$ has no integer solutions when $n>2$, will also be discussed. 

The order of the paper is as follows. Section 2 discusses preliminaries. In Section 3 we give a description of all Pythagorean triples in ${\ZZ}[\tau]$. Section 4 gives a description of all triples that are in the Fibonacci model set that satisfy the Diophantine equations arising from Fermat's Last Theorem. We end with examples of these triples and show that there are solutions to the third degree Diophantine equation.

\section{Preliminaries}

To define the Fibonacci model set, we utilize the Fibonacci substitution \cite[Example 4.6]{TAO1}: 
\begin{align*}
    a \rightarrow ab \\
    b \rightarrow a
\end{align*}

If we start with $a|a$, with the bar representing the origin,  and keep applying the substitution rule on each side of the origin, we obtain 
\vspace{.1in}

$a|a \rightarrow ab|ab \rightarrow aba|aba \rightarrow abaab|abaab \rightarrow abaababa|abaababa \rightarrow ...$

\vspace{.1in}

Notice that every element is contained in the second substitution forward. Using the corresponding substitution matrix 
$\begin{pmatrix}
1 & 1 \\
1 & 0 \end{pmatrix}$, 
which has an eigenvalue of $\tau$, we can convert this symbolic substitution into a subset of the real line by viewing $a$ as an interval of length $\tau$, $b$ as an interval of length $1$, and taking endpoints. 

\vspace{.1in}
   
If we do this on every second sequence, these points make up the Fibonacci model set:
\vspace{.1in}

$a|a \rightarrow aba|aba \rightarrow abaababa|abaababa \rightarrow ...$

\vspace{.1in}

becomes
\begin{align*}
\{-\tau, 0, \tau\} &\rightarrow \{-2\tau-1, -\tau-1, -\tau, 0, \tau, \tau+1, 2\tau+1\} \\
&\rightarrow \{-5\tau-3, -4\tau-3, -4\tau-2, -3\tau-2, -2\tau-2, -2\tau-1,-\tau-1,-\tau, \\
& \quad \quad 0, \tau, \tau+1, 2\tau+1, 3\tau+1, 3\tau+2, 4\tau+2, 4\tau+3, 5\tau+3\} \\
&\rightarrow ...
\end{align*}

\vspace{.1in}

We need a little more theory before introducing a more workable description of the Fibonacci model set. 

\subsection{Basic Facts About The Ring ${\ZZ}[\tau]$}
\hfill\\
\vspace{0.01cm}

Here we list basic facts about ${\ZZ}[\tau]$. For further details see \cite{Hunger}. 

$\tau=\frac{1+\sqrt{5}}{2}$ and  $\tau'=\frac{1-\sqrt{5}}{2}$ are the roots of $x^2-x-1=0$, the eigenvalues of the Fibonacci substitution matrix. They satisfy the following relations:
\begin{align*}
\begin{cases}
   \tau^2=\tau+1 \\  
\tau+\tau'=1 \\
\tau\tau'=-1 \\
\tau'^2=\tau'+1.
\end{cases}
\end{align*}
    
Denote by 
$${\ZZ}[\tau] = \{m+n\tau \, | \, m,n \in {\ZZ}\},$$
the ring ${\ZZ}$ adjoin $\tau$. From the above properties of $\tau$ it is very easy to see that for $x,y \in {{\mathbb Z}} [\tau]$, then $x+y, x-y, xy \in {{\mathbb Z}} [\tau]$.

\vspace{0.1in}

Next, let us consider the \emph{Galois conjugation} $\sigma: {\ZZ}[\tau] \mapsto {\ZZ}[\tau]$ such that,
$$\sigma(m+n\tau)= m+n\tau'.$$
Note here that 
\begin{align*} 
&\tau+\tau'=1 \implies \tau'=1-\tau \\
\implies &\sigma (m+n\tau)= (m+n)-n\tau \in {{\mathbb Z}} [\tau].
\end{align*}
Thus, $\sigma$ is well defined.
In particular, the function $\sigma$ is a ring homomorphism and an involution. Therefore for $x, y \in {{\mathbb Z}} [\tau]$ we have
\begin{align*}
&\sigma (x+y)= \sigma(x) + \sigma(y) \\
&\sigma (xy)= \sigma(x) \sigma(y) \quad \textrm{and} \\
&\sigma(\sigma(x))=x.
\end{align*}

As mentioned, the Fibonacci model set can be described in a much simpler manner than the iterative process given above. This will prove very useful for the purposes of this paper. 

\begin{definition}[Fibonacci Model Set] \cite[Example 7.3]{TAO1} \label{def:2.3}
Another description of the Fibonacci Model Set is given by ${\displaystyle \Lambda} = \big\{x \in {\ZZ}[\tau] \,|\, \sigma(x) \in [-1, \tau-1)\big\}$. This description of the Fibonacci Model Set is referred to as a model set.
\end{definition}

\begin{lem} \label{lem:2.4}
If $x \in \Lambda$ then $x\tau \in {\displaystyle \Lambda}$. 
\end{lem}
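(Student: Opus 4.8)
The plan is to reduce the membership condition for $x\tau$ to a statement about the single real number $\sigma(x)$, and then to analyse how multiplication by $\tau'$ moves the window $[-1,\tau-1)$. Since $\sigma$ is a ring homomorphism, so that $\sigma(x\tau)=\sigma(x)\sigma(\tau)=\tau'\,\sigma(x)$, and since $x\in\Lambda$ means exactly $\sigma(x)\in[-1,\tau-1)$, the lemma is equivalent to the claim that the linear map $t\mapsto \tau' t$ sends $[-1,\tau-1)$ into itself. First I would record the two facts I need about $\tau'$: from $\tau+\tau'=1$ we get $\tau'=1-\tau=-(\tau-1)$, so $\tau'$ is negative with $|\tau'|=\tau-1\in(0,1)$; and from $\tau\tau'=-1$ one computes the image of the right endpoint, $\tau'(\tau-1)=\tau'\tau-\tau'=-1-(1-\tau)=\tau-2$.

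Next I would exploit that $t\mapsto\tau' t$ is orientation-reversing, since $\tau'<0$. Tracking the two endpoints of the half-open window, the included left endpoint $t=-1$ maps to $\tau'(-1)=-\tau'=\tau-1$, while the excluded right endpoint $t=\tau-1$ maps to $\tau-2$. Because the map is a decreasing bijection, the image of $[-1,\tau-1)$ is the interval $(\tau-2,\tau-1]$, now with the right end closed and the left end open. Comparing with the target window $[-1,\tau-1)$, the left side is comfortably fine, since $\tau-2>-1$ (as $\tau>1$); in particular every value coming from $\sigma(x)\in(-1,\tau-1)$ lands in $(\tau-2,\tau-1)\subseteq[-1,\tau-1)$, so the lemma holds for all such $x$.

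The crux, and where I expect the real obstacle to sit, is the single boundary point. The one value not covered by the argument above is the included left endpoint $\sigma(x)=-1$, that is $x=-1\in\Lambda$; here $\sigma(x\tau)=\tau-1$, which is exactly the point excluded from $[-1,\tau-1)$, so $x\tau=-\tau\notin\Lambda$ under the stated (left-closed, right-open) convention. Thus the statement as written appears to fail precisely at $x=-1$, and the proof will hinge on how this endpoint is handled. I would therefore expect the argument either to exclude the degenerate point $x=-1$, or to adopt the complementary window $(-1,\tau-1]$, for which the same endpoint computation gives image $[\tau-2,\tau-1)\subseteq(-1,\tau-1]$ and the inclusion $x\in\Lambda\Rightarrow x\tau\in\Lambda$ goes through without exception; this latter convention is also the one consistent with the iterative construction, in which $-\tau$ appears but $-1$ does not.
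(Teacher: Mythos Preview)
Your approach is exactly the paper's: compute $\sigma(x\tau)=\tau'\sigma(x)$ and track where multiplication by the negative number $\tau'$ sends the window. The paper writes this in one line as $\sigma(x)\tau'\in(-\tau'^2,-\tau']\subseteq[-1,\tau-1)$, which is precisely your interval $(\tau-2,\tau-1]$.

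You are, however, more careful than the paper, and the boundary anomaly you flag is real. The claimed containment $(-\tau'^2,-\tau']\subseteq[-1,\tau-1)$ fails at the right endpoint $-\tau'=\tau-1$, and your explicit counterexample $x=-1\in\Lambda$, $x\tau=-\tau\notin\Lambda$ is valid under the stated half-open convention. The paper simply asserts the inclusion and moves on; it does not address the endpoint, and the lemma as written is false at this single point. Your diagnosis that the issue disappears under the complementary convention $(-1,\tau-1]$ is correct, and your observation that the iterative construction produces $-\tau$ (but not $-1$) is a good piece of evidence that this is the intended window. For the purposes of the paper the lemma is only invoked in Theorem~4.6, where one already has all three points in $\Lambda$ and wants to stay there under further multiplication by $\tau$; since $\sigma(x)=-1$ forces $x=-1$, which cannot be part of a nontrivial Fermat triple that has just been pushed into the open interval $(1-\tau,\tau-1)$ by the preceding lemmas, the gap is harmless in context---but it is a gap, and you found it.
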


\begin{proof}
By Definition \ref{def:2.3}, $\sigma(x) \in [-1, \tau-1)$.

\vspace{.1in}

Then $\sigma(x\tau) = \sigma(x)\tau' \in (-\tau'^2, -\tau'] \subseteq [-1, \tau-1).$ Therefore, $x\tau \in \Lambda$. 

\end{proof}
Define the norm of ${\ZZ}[\tau]$, $N:{\ZZ}[\tau] \rightarrow {\ZZ}$, by $N(x)=x\sigma(x)$.\\
Thus, \begin{align*} N(m+n\tau)&=(m+n\tau)\sigma(m+n\tau)\\
&=m^2+mn\tau'+mn\tau+n^2\tau\tau' \\
&=m^2+mn\tau'+mn\tau-n^2 \\
&=m^2+mn-n^2 \in {\ZZ}.
\end{align*}
Since $\sigma$ is multiplicative, so is $N$.

It should also be noted that ${\ZZ}[\tau]$ is a Unique Factorization Domain (UFD). The following well-known theorem proves this fact. 

\begin{theorem} \label{thm:2.3}
${\ZZ}[\tau]$ is a UFD.
\end{theorem}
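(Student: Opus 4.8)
The plan is to prove the stronger assertion that ${\ZZ}[\tau]$ is a \emph{Euclidean domain} with respect to the absolute value of the norm $N$ introduced above; the theorem then follows from the standard chain of implications Euclidean domain $\Rightarrow$ principal ideal domain $\Rightarrow$ UFD. First I would record that ${\ZZ}[\tau] \subseteq {\RR}$ is an integral domain, and that for $x \neq 0$ the value $N(x) = x\sigma(x)$ is a nonzero integer, so $|N(x)| \ge 1$. The Euclidean function will be $\phi(x) = |N(x)|$, and the multiplicativity of $N$ (already established) carries most of the bookkeeping.

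The core is the division-with-remainder step: given $a, b \in {\ZZ}[\tau]$ with $b \neq 0$, I want $q, r \in {\ZZ}[\tau]$ such that $a = bq + r$ with either $r = 0$ or $|N(r)| < |N(b)|$. I would carry this out in the field of fractions ${\QQ}(\tau) = \{u + v\tau \mid u, v \in {\QQ}\}$; here the quotient is legitimate because $1/b = \sigma(b)/N(b)$ with $N(b) \in {\ZZ} \setminus \{0\}$, so $a/b = a\,\sigma(b)/N(b)$ does lie in ${\QQ}(\tau)$ and can be written as $u + v\tau$ with $u, v \in {\QQ}$. Let $p$ and $s$ be integers nearest to $u$ and $v$, so $|u - p| \le \tfrac{1}{2}$ and $|v - s| \le \tfrac{1}{2}$, and set $q = p + s\tau \in {\ZZ}[\tau]$ and $r = a - bq$. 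Then $r \in {\ZZ}[\tau]$ automatically, and multiplicativity gives $N(r) = N(b)\,N(a/b - q)$, reducing everything to a bound on $|N(a/b - q)|$.

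For the remainder estimate, write $\delta = a/b - q = (u-p) + (v-s)\tau$, so that the explicit norm formula yields $N(\delta) = (u-p)^2 + (u-p)(v-s) - (v-s)^2$. With each of $|u-p|$ and $|v-s|$ at most $\tfrac{1}{2}$, the triangle inequality gives $|N(\delta)| \le \tfrac{1}{4} + \tfrac{1}{4} + \tfrac{1}{4} = \tfrac{3}{4} < 1$. Hence $|N(r)| = |N(b)|\,|N(\delta)| < |N(b)|$ whenever $b \neq 0$, which verifies the Euclidean property and completes the proof.

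I do not expect a genuine obstacle here: the argument is the textbook proof that ${\QQ}(\sqrt{5})$ is norm-Euclidean, and the only point needing care is the remainder estimate. It is worth stressing that the quadratic form $m^2 + mn - n^2$ is \emph{indefinite}, so one might worry about its sign behaviour on the fundamental square; the crude triangle-inequality bound $\tfrac{3}{4} < 1$ sidesteps this entirely, and a sharper analysis (checking the interior critical point and the four edges) would even give $|N(\delta)| \le \tfrac{5}{16}$. Since the slack is comfortable, no delicate optimization is required.
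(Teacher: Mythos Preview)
Your proposal is correct and follows essentially the same route as the paper: both show ${\ZZ}[\tau]$ is norm-Euclidean by writing $a/b$ in ${\QQ}(\tau)$, rounding the coefficients to the nearest integers, and bounding the norm of the remainder. The only cosmetic difference is in the remainder estimate---the paper asserts $|N(\delta)| < \tfrac{1}{2}$ without justification, while you give the cruder but fully argued triangle-inequality bound $\tfrac{3}{4}$ (and correctly note the sharp value is $\tfrac{5}{16}$).
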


\begin{proof}
Let $x, y \in {{\mathbb Z}}[\tau], y \neq 0$. Then,
$$ \frac{x}{y} = \frac{x\sigma(y)}{N(y)} = \frac{m+n\tau}{N(y)} $$
for some $m, n, l = N(y) \in {\ZZ}$.
Therefore $\exists \, a, b \in {\QQ}$ such that 
$$\frac{x}{y} = a+b\tau.$$
Pick integers $k, l$ such that, 
$$ \left|a-k\right| \leq \frac{1}{2} $$
$$ \left|b-l\right| \leq \frac{1}{2}. $$
Set $q=k+l\tau$. Then,
$$ \frac{x}{y} - q = (a-k) + (b-l)\tau $$ 
and 
$$ - \frac{1}{2} < (a-k)^2 + (a-k)(b-l) - (b-l)^2 < \frac{1}{2}. $$
This gives
\begin{align*}
|N(x-qy)| &< \frac{1}{2} |N(y)| \\
&< |N(y)|.
\end{align*}
Therefore ${\ZZ}[\tau]$ with $\sigma(y) = \left|N(y)\right|$ is a Euclidean Domain, and hence a UFD.

\end{proof}

Another important fact required for this paper is the irreducibility of $2$ in ${\ZZ}[\tau]$. 

\begin{prop} \label{fact:2.4}
2 is irreducible in ${{\mathbb Z}}[\tau]$.
\end{prop}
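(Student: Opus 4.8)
The plan is to use the multiplicative norm $N$ to reduce irreducibility of $2$ in $\ZZ[\tau]$ to an arithmetic fact about which integers are representable by the norm form. First I would record that $N$ is multiplicative (established in the excerpt) and that the units of $\ZZ[\tau]$ are exactly the elements of norm $\pm 1$; this is the standard characterization in a ring where $N(x)=x\sigma(x)$ and $\sigma$ is a ring involution, since $N(u)=\pm 1$ forces $\sigma(u)$ (up to sign) to be a two-sided inverse of $u$. I would then compute $N(2)=2^2+2\cdot 0-0^2=4$.

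The heart of the argument is the following. Suppose $2=xy$ with $x,y\in\ZZ[\tau]$ neither a unit. Applying the norm gives $N(x)N(y)=N(2)=4$. Since neither $x$ nor $y$ is a unit, $|N(x)|\neq 1$ and $|N(y)|\neq 1$, so the only possibility is $|N(x)|=|N(y)|=2$, i.e. $N(x)=\pm 2$. Thus the whole question reduces to showing that the norm form never takes the value $\pm 2$: I would prove that there are no integers $m,n$ with
\begin{equation*}
m^2+mn-n^2=\pm 2.
\end{equation*}
To do this I would analyze this quadratic form modulo a small integer. Multiplying by $4$ and completing the square gives $(2m+n)^2-5n^2=\pm 8$, so it suffices to show $u^2-5n^2=\pm 8$ has no integer solutions. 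Reducing $u^2-5n^2$ modulo $5$ leaves $u^2\equiv \pm 3\pmod 5$, and since the quadratic residues mod $5$ are $\{0,1,4\}$, neither $3$ nor $2\equiv -3$ is a square mod $5$. This contradiction shows no representation of $\pm 2$ exists, hence no factorization $2=xy$ into non-units is possible.

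I expect the main obstacle to be purely bookkeeping rather than conceptual: one must be careful that $N$ can be negative (its image includes negative integers, as the form $m^2+mn-n^2$ is indefinite), so the argument must consistently track absolute values and cover both signs $N(x)=\pm 2$ simultaneously. The modular obstruction via $(2m+n)^2-5n^2\equiv \pm 3\pmod 5$ handles both signs at once, which is why I would route the computation through that completed-square form rather than attempting a direct case analysis on $m^2+mn-n^2=\pm 2$. I would close by noting that $2$ is nonzero and is not a unit (as $N(2)=4\neq\pm 1$), so having ruled out all nontrivial factorizations we conclude that $2$ is irreducible in $\ZZ[\tau]$.
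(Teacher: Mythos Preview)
Your proposal is correct and follows essentially the same route as the paper: reduce via the multiplicative norm to ruling out $N(x)=\pm 2$, then multiply by $4$, complete the square to $(2m+n)^2-5n^2=\pm 8$, and obtain a contradiction modulo $5$. Your version is slightly more careful in explicitly justifying that units are exactly the norm-$\pm 1$ elements and that $2$ itself is a nonunit, but the substance of the argument is identical.
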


\begin{proof}
Assume by contradiction that 2 is not irreducible. Then, $\exists \, x, y \in {{\mathbb Z}}[\tau]$ such that $N(x), N(y) \neq 0, \pm1$ and $2=xy$. Then, $4=N(2)=N(xy)=N(x)N(y)$. Since $N(x)\neq\pm1, N(y)\neq \pm1$, we must have $N(x)=\pm 2$. Let $x=m+n\tau$. Then, \begin{align*}
\pm 2&= N(x)=m^2+mn-n^2 \\
\implies \pm8&= 4m^2+4mn-4n^2 \\
\implies \pm8&= 4m^2+4mn+n^2-5n^2 \\
\implies \pm8& =(2m+n)^2-5n^2 \\
\implies \pm3 &=(2m+n)^2 \mbox{ (mod 5)}.
\end{align*}

But it is easy to see that the equation $z^2 = \pm 3 \pmod{5}$ has no solution, which is a contradiction.

\end{proof}

\section{Description of Triples in ${{\ZZ}} [\tau]$}

In any commutative ring it is straightforward to produce many examples of Pythagorean triples. The following result is folklore.

\begin{prop}
For $l, m, n \in {\ZZ}[\tau]$, define $x, y, z$ in the following manner:
\begin{align*}
    \begin{cases}
        x=\pm2 lmn \\
        y= l(m^2-n^2) \\
        z= l(m^2+n^2).
    \end{cases}
\end{align*}
Then, $x^2+y^2=z^2$. 
\end{prop}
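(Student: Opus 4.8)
The plan is to prove the identity by a single direct algebraic expansion, exploiting only the fact that $\ZZ[\tau]$ is a commutative ring. The closure of $\ZZ[\tau]$ under addition, subtraction, and multiplication was already recorded above, so every manipulation below takes place inside $\ZZ[\tau]$. The key structural observation is that $x^2 + y^2 = z^2$ is really a polynomial identity in the indeterminates $l, m, n$ that holds over \emph{any} commutative ring; no special arithmetic property of $\tau$ beyond commutativity is invoked, which is why the result is folklore.

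First I would square $x$. Since $x = \pm 2lmn$, the ambiguous sign disappears upon squaring, giving $x^2 = 4 l^2 m^2 n^2$. This is the only place the $\pm$ plays any role, and it simply confirms that both sign choices produce the same equation. Next I would expand the other two squares by the binomial identity $(u \pm v)^2 = u^2 \pm 2uv + v^2$, valid in any commutative ring, to obtain
\begin{align*}
y^2 &= l^2 (m^2 - n^2)^2 = l^2 (m^4 - 2 m^2 n^2 + n^4), \\
z^2 &= l^2 (m^2 + n^2)^2 = l^2 (m^4 + 2 m^2 n^2 + n^4).
\end{align*}

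Finally I would add the expressions for $x^2$ and $y^2$ and compare the result with $z^2$:
\begin{align*}
x^2 + y^2 &= 4 l^2 m^2 n^2 + l^2 (m^4 - 2 m^2 n^2 + n^4) \\
&= l^2 (m^4 + 2 m^2 n^2 + n^4) = z^2,
\end{align*}
where the middle equality just collects the mixed terms as $4 l^2 m^2 n^2 - 2 l^2 m^2 n^2 = 2 l^2 m^2 n^2$. There is no genuine obstacle in this argument: the whole proof is a routine verification. The only point requiring any care is the justification that commutativity of $\ZZ[\tau]$ permits treating $l, m, n$ as formal symbols and freely rearranging monomials; since this was established in the preliminaries, the computation goes through verbatim.
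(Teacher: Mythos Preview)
Your proof is correct and follows exactly the same route as the paper: a direct expansion of $x^2+y^2$ and comparison with $z^2$, using only commutativity. The only difference is cosmetic---you add a bit more commentary about the sign and the ambient ring, but the computation is line-for-line the same.
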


\begin{proof}
We can show this equation holds for the given definition of $x, y, z$:

\begin{align*}
    x^2+y^2 &= (\pm 2lmn)^2+(l(m^2-n^2))^2 \\
    &= 4l^2m^2n^2+l^2(m^4-2m^2n^2+n^4)=l^2(m^4+2m^2n^2+n^4) \\
    &= (l(m^2+n^2))^2 = z^2.
\end{align*}

\end{proof} 
Note that multiple choices of $l,m$ and $n$ can give the same triple. What is harder to show is that this description, including when the roles of $x$ and $y$ are interchanged, gives all Pythagorean triples in a UFD extension of ${\ZZ}$ where $2$ remains prime.

We follow here a standard folklore proof, ignoring uniqueness and adapting it to the ${\ZZ}[\tau]$ context. First, we need a technical lemma. 
\begin{lem}
Let $a,b \in {{\ZZ}} [\tau]$ be non-zero such that
\begin{align*}
    \begin{cases}
    \mbox{gcd} (a,b)=1 \\
    a \cdot b=c^2
    \end{cases}
\end{align*}
Then, there exists unit $u$ in ${\ZZ}[\tau]$ and $m,n \in {\ZZ}[\tau]$ such that 
\begin{align*}
    \begin{cases}
    a= u \cdot m^2 \\
    b= u \cdot n^2
    \end{cases}
\end{align*}
\end{lem}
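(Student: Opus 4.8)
The plan is to exploit that $\mathbb{Z}[\tau]$ is a UFD (Theorem \ref{thm:2.3}) and run the classical ``coprime factors of a square are themselves squares up to units'' argument, and then to reconcile the two leftover units into a single common unit $u$.

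First I would fix factorizations into irreducibles. Since $a$ and $b$ are nonzero, each can be written as a unit times a product of irreducibles, and the hypothesis $\gcd(a,b)=1$ means no irreducible divides both $a$ and $b$. I would then argue prime by prime: if an irreducible $p$ exactly divides $a$ to the power $e$, then $p \nmid b$, so $p$ divides $a b = c^2$ to exactly the power $e$; by unique factorization the exponent of $p$ in $c^2$ is even, hence $e$ is even. The same reasoning applies to every irreducible dividing $a$, and symmetrically to every irreducible dividing $b$. Consequently every irreducible occurs to an even power in $a$ and in $b$, which yields units $u_1, u_2$ and elements $m, n \in \mathbb{Z}[\tau]$ with $a = u_1 m^2$ and $b = u_2 n^2$.

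Second, I would pin down the relationship between the two leftover units. Substituting into the hypothesis gives $c^2 = u_1 u_2 \,(mn)^2$. Since $c^2$ and $(mn)^2$ differ only by the unit $u_1 u_2$, comparing prime factorizations shows every irreducible occurs to the same power in $c$ and in $mn$; hence $c = w\,mn$ for some unit $w$, and therefore $u_1 u_2 = w^2$ is the square of a unit. Finally I would merge the units by taking $u := u_1$: writing $u_2 = u_1^{-1} w^2$, we get $b = u_2 n^2 = u_1\,(u_1^{-1} w\, n)^2$, and since $u_1^{-1} w$ is a unit the element $\tilde n := u_1^{-1} w\, n$ lies in $\mathbb{Z}[\tau]$. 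With $\tilde m := m$ this produces $a = u \tilde m^2$ and $b = u \tilde n^2$ with the \emph{same} unit $u$, as required.

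The main obstacle is the first step: making the prime-by-prime exponent argument fully rigorous in the UFD $\mathbb{Z}[\tau]$, in particular invoking coprimality to guarantee that the entire even power of each irreducible is concentrated in a single factor. The unit bookkeeping in the remaining steps is routine, and I would note that this route is clean precisely because it sidesteps having to determine the explicit unit group of $\mathbb{Z}[\tau]$; all that is used is the single relation $u_1 u_2 = w^2$.
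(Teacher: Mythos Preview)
Your proposal is correct and follows essentially the same route as the paper: use unique factorization in $\mathbb{Z}[\tau]$ to force every irreducible exponent in $a$ and in $b$ to be even, obtain $a=u_1 m^2$, $b=u_2 n^2$, then deduce $u_1 u_2=w^2$ from $c^2=u_1 u_2 (mn)^2$ and absorb to a single unit $u=u_1$ via $b=u_1(u_1^{-1}w\,n)^2$. The one place where the paper is slightly more explicit than your sketch is precisely the step you flag as the main obstacle: it fixes a set $Q$ of representatives for associate classes of primes so that the factorizations of $a$, $b$, and $c$ can be compared literally, making the ``coprimality forces disjoint prime supports, hence each exponent in $c^2$ lands entirely in one factor'' argument clean.
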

\begin{proof}
Let $P$ be the set of all prime elements in ${\ZZ}[\tau]$. Define an equivalence relation on $P$ via $p \sim q$ if $q$ and $p$ are associated. Fix some system of representatives $Q$ for this equivalence relation, meaning $Q$ consists of exactly one element from each equivalence class. 
Then, since ${\ZZ}[\tau]$ is a UFD as proved in Theorem \ref{thm:2.3}, there exists unique $p_1, ... , p_k$ and $q_1, ... , q_j \in Q$ and units $u,v$ such that 
\begin{align*}
    a= u \cdot p_1^{\alpha_1} \cdot \ldots \cdot p_k^{\alpha_k} \\
    b= v \cdot q_1^{\beta_1} \cdot \ldots \cdot q_j^{\beta_j}
\end{align*}
Since gcd $(a,b) =1$, then $p_1, ... , p_k, q_1, ... , q_k$ are pairwise distinct, i.e. $\forall \, i \neq j$ we have $p_i \neq q_j$.
\vspace{.1in}

Then, $c^2= u \cdot v \cdot p_1^{\alpha_1}\cdot \ldots \cdot p_k^{\alpha_k} \cdot q_1^{\beta_1}\cdot \ldots \cdot q_j^{\beta_j}$ 
\vspace{.1in}

 We show now that each power is even. Indeed, by the Fundamental Theorem of Arithmetic, 
$c$ will have a unique prime decomposition $c= w \cdot r_1^{m_1} \cdot \ldots \cdot r_s^{m_s}$ for prime elements $r_1, ... , r_s \in Q$ and unit $w$. \\
Then, $c^2=w^2 \cdot r_1^{2m_1} \cdot \ldots \cdot r_s^{2m_s}$ and $c^2= u \cdot v \cdot p_1^{\alpha_1} \cdot \ldots \cdot q_j^{\beta_j}$. \\
The uniqueness of prime factorization and the choice of $Q$ gives $uv=w^2$ and each $p_l^{\alpha_l}$ or $q_l^{\beta_l}$ must be one of $r_t^{2m_t}$, meaning the powers are even. 
\vspace{.1in}

We can also note that 
$$
    u \cdot v = w^2 \implies v = w^2 \cdot u^{-1} = u (w \cdot u^{-1})^2.
$$
Then, $v=uk^2$ where $k=w \cdot u^{-1}$.
\vspace{.1in}

So, since $\alpha_1, ... , \alpha_k, \beta_1,...,\beta_k$ are even, 
\begin{align*}
    \alpha_i=2\alpha_i' \\
    \beta_i=2\beta_i'
\end{align*}
where $\alpha_i', \beta_i' \in {\NN}$. Therefore,  
\begin{align*}
    a= u \cdot p_1^{\alpha_1} \cdot \ldots \cdot p_k^{\alpha_k} = u \cdot \left(\underbrace{p_1^{\alpha_1'} \cdot \ldots \cdot p_k^{\alpha_k'}}_{\text{m}}\right)^2
    \end{align*}
    \begin{align*}
    b= v \cdot q_1^{\beta_1} \cdot \ldots \cdot q_j^{\beta_j}&= v \cdot (q_1^{\beta_1'} \cdot \ldots \cdot q_k^{\beta_j'})^2 \\
    &=u \cdot k^2 \cdot (q_1^{\beta_1'} \cdot \ldots \cdot q_k^{\beta_j'})^2\\
    &= u \cdot \left(\underbrace{k \cdot q_1^{\beta_1'} \cdot \ldots \cdot q_k^{\beta_j'}}_{\text{n}}\right)^2
\end{align*}
\end{proof}
With this we can now prove the main result of this section.
\begin{theorem}
Every Pythagorean Triple in ${\ZZ}[\tau]$ is of the form 
\begin{align*}
    \begin{cases}
        x=\pm2 lmn \\
        y= l(m^2-n^2) \\
        z= l(m^2+n^2)
    \end{cases}
\end{align*}
or where $x$ and $y$ are interchanged.
\end{theorem}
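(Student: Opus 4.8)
The plan is to reduce to a primitive triple, factor $x^2=(z-y)(z+y)$ in the UFD $\mathbb{Z}[\tau]$, and feed the two coprime factors into the preceding lemma, using the primality of $2$ (Proposition \ref{fact:2.4}) to control their common divisor. Throughout I use Theorem \ref{thm:2.3}, so that greatest common divisors exist up to units and $p^2\mid c^2$ implies $p\mid c$. I first dispose of degenerate triples: since $\mathbb{Z}[\tau]\subseteq\mathbb{R}$, the relation $x^2+y^2=0$ forces $x=y=0$, so the only triple with $z=0$ is trivial, and any triple with $x=0$ or $y=0$ is already of the stated form with $n=0$. Hence I may assume $x,y,z$ are all nonzero.

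Next I reduce to the primitive case. Writing $g=\gcd(x,y)$, $x=gx_0$, $y=gy_0$ with $\gcd(x_0,y_0)=1$, the identity $g^2(x_0^2+y_0^2)=z^2$ gives $g\mid z$, say $z=gz_0$, so $x_0^2+y_0^2=z_0^2$ is primitive; a common prime of $y_0,z_0$ would divide $x_0^2=z_0^2-y_0^2$ and hence $x_0$, so also $\gcd(y_0,z_0)=1$. Because multiplying a triple by any ring element corresponds to scaling the parameter $l$, it suffices to prove that every primitive triple has the required form with $l$ a unit, and then restore $g$ by taking $l=gu$.

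Now I factor $x_0^2=(z_0-y_0)(z_0+y_0)$ and set $\delta=\gcd(z_0-y_0,\,z_0+y_0)$. Since $\delta$ divides $2z_0$ and $2y_0$, it divides $2\gcd(y_0,z_0)=2$; as $2$ is prime, $\delta$ is either a unit or an associate of $2$, and these two alternatives produce the two families in the statement. If $\delta\sim 2$, then $z_0-y_0=2a$ and $z_0+y_0=2b$ with $a,b$ coprime and $4ab=x_0^2$, so $2\mid x_0$ and $(x_0/2)^2=ab$; the lemma gives a unit $u$ with $a=um^2$, $b=un^2$, whence $z_0=u(m^2+n^2)$, $y_0=u(n^2-m^2)$, $x_0=\pm 2umn$, and interchanging $m,n$ yields the non-interchanged form with $l=u$. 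If instead $\delta$ is a unit, the coprime factors give $z_0-y_0=um^2$, $z_0+y_0=un^2$, so $x_0=\pm umn$ and $2z_0=u(m^2+n^2)$, $2y_0=u(n^2-m^2)$.

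I expect this last branch to be the main obstacle, since here the factor of $2$ required by the parametrization is not visibly present. The remedy is to observe that $2\mid u(m^2+n^2)\equiv u(m+n)^2\pmod 2$, so by primality of $2$ we get $2\mid m+n$ and likewise $2\mid m-n$; hence $s=(m+n)/2$ and $t=(m-n)/2$ lie in $\mathbb{Z}[\tau]$, and substituting $m=s+t$, $n=s-t$ turns the formulas into $x_0=\pm u(s^2-t^2)$, $y_0=\pm 2ust$, $z_0=u(s^2+t^2)$, which is exactly the interchanged form with $l=u$. This is where Proposition \ref{fact:2.4} is indispensable, and the same computation shows the ``both legs odd'' configuration never arises. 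Unlike the classical integer proof, which normalizes parity at the outset to decide which leg is even, here the clean dichotomy is on $\delta$ and the passage from $(m,n)$ to $(s,t)$ is what reconciles the coprime branch with the stated shape; the residual signs are absorbed by the $\pm$ and by replacing $t$ with $-t$.
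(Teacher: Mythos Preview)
Your proof is correct and follows essentially the same route as the paper: factor $x^2=(z-y)(z+y)$, extract the gcd of the two factors, apply the preceding lemma to the coprime parts, and use the primality of $2$ to split into two cases, with the substitution $(m,n)\mapsto\bigl(\tfrac{m+n}{2},\tfrac{m-n}{2}\bigr)$ handling the branch where the factor $2$ is not already visible. The only cosmetic difference is that you first pass to a primitive triple so that $\delta\mid 2$, whereas the paper keeps the full gcd $d$ and case-splits on whether $2\mid du$; you are also more careful than the paper about the degenerate cases with a zero entry.
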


\begin{proof}
Let $x,y,z \in {{\ZZ}} [\tau]$ be such that $$x^2+y^2=z^2.$$
Then, 
$$x^2=z^2-y^2=(z-y)(z+y).$$

Let $d=\mbox{gcd}(z-y,z+y)$. Then, $\exists \, a,b \in {{\ZZ}} [\tau]$ such that 
\begin{align*}
    \begin{cases}
    z-y=da \\
    z+y=db \\
    \mbox{gcd}(a,b)=1.
    \end{cases}
\end{align*}
Therefore, 
\begin{align*}
x^2= dadb= d^2 \cdot ab\\
\implies d^2|x^2 \implies d|x.
\end{align*}
So $x=dc$ for some $ c \in {{\ZZ}}[\tau]$. Then,
\begin{align*}
d^2c^2=x^2=d^2\cdot ab \\
\implies 
\begin{cases}
\mbox{gcd} (a,b)=1 \\
    a \cdot b=c^2.
\end{cases}
\end{align*}
The previous lemma gives that there exists unit $u \in {\ZZ}[\tau]$ and $m,n \in {\ZZ}[\tau]$ such that 
\begin{align*}
    \begin{cases}
    a=u \cdot m^2 \\
    b= u \cdot n^2
    \end{cases}
\end{align*}
and hence 
\begin{align*}
    \begin{cases}
    z+y= da= du \cdot m^2 \\
    z-y= db= du \cdot n^2
    \end{cases}
\end{align*}
which implies
\begin{align}
    \begin{cases}
    2z= du \cdot (m^2+n^2) \\
    2y= du \cdot (m^2-n^2)
    \end{cases}
\end{align}
Since 2 is a prime element, we have two cases: \\
\underline{Case 1}: $2 \mid du$ 
\vspace{.1in}

Let $du=2l$. Then, Equation 1 becomes 
\begin{align*}
    \begin{cases}
    2z &= 2l \cdot (m^2+n^2) \\ 
    2y &= 2l \cdot (m^2-n^2) \end{cases}
    \quad \rightarrow \quad \begin{cases} 
    z &= l \cdot (m^2+n^2) \\ 
    y &= l \cdot (m^2-n^2) \end{cases} 
\end{align*}
Then, \begin{align*}
    x^2= (z+y)(z-y) &= dum^2 \cdot dun^2 =2lm^2 \cdot 2ln^2  =(2lmn)^2
    \end{align*}
\begin{align*}
    \begin{cases}
    x= \pm 2lmn \\
    y= l (m^2-n^2) \\
    z= l (m^2+n^2)
    \end{cases}
\end{align*}

\underline{Case 2}: $2 \nmid du$
\vspace{.1in}

Since $2y=du (m^2-n^2)$ and 2 is prime, then

\begin{equation*}
  2 \mid m^2 - n^2 = (m-n)(m+n) 
\end{equation*}
and so 
$2 \mid m-n$ or $2 \mid m+n$. 
\vspace{.1in}

By replacing $n$ with $ -n$, we need discuss only the case $2|m-n$. 
\vspace{.1in}

Let $m-n=2m'$. \\
Then, $m+n= m-n+2n = 2m'+2n = 2(m'+n)$.
\vspace{.1in}

Set $n'=m'+n$ and so
$m+n=2n'$. 
\vspace{.1in}

Set $l'=du$. Then, Equation 1 becomes 
\begin{align*}
    \begin{cases}
    2z= l'(m^2+n^2) \\
    2y= l'(m^2-n^2)
    \end{cases}
\end{align*}
Now, $m^2-n^2=(m-n)(m+n)=2m'2n'$.
\vspace{.1in}

Next, \begin{align*}
m^2+n^2&= \frac{1}{2}\big((m-n)^2+(m+n)^2\big) \\
&= \frac{1}{2}(4m'^2+4n'^2) \\
&= 2(m'^2+n'^2) \\
\end{align*}
Hence,
\begin{align*}
\begin{cases}
z= l' (m'^2+n'^2) \\
y= 2l' m' n'
\end{cases}
\end{align*}
Finally,
\begin{align*}
    x^2=(z-y)(z+y) &= l'^2m^2n^2 =l'^2 (mn)^2 \\
    = l'^2(\frac{1}{4}((\underbrace{m+n}_{\text{2n'}})^2-(\underbrace{m-n}_{\text{2m'}})^2))^2 & =l'^2 (\frac{1}{4}(4n'^2-4m'^2))^2 =l'^2 (n'^2-m'^2)^2\\
    &\implies
    \begin{cases}
    x= \pm l' (n'^2-m'^2) \\
    y= 2 l'm'n' \\
    z= l' (m'^2+n'^2)
    \end{cases}
\end{align*}
\end{proof}

\section {Pythagorean Triples in the Fibonacci Model Set}

We now turn to triples in the Fibonacci model set which satisfy the Diophantine equations arising in Fermat's Last Theorem. We will end with examples of triples in the Fibonacci model set satisfying Pythagoras' equation and the third degree Diophantine equation. 
\vspace{.1in}

The following lemma is straightforward and its proof will not be given.

\begin{lem} \label{lem:4.2}
If $z\in {{\mathbb Z}}[\tau] \backslash \{0\}$, then there exists an $n_z \in {\ZZ}$ such that $n \geq n_z$ if and only if $|z(\tau')^n|<\tau-1$.
\end{lem}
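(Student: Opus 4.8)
The plan is to recognize the quantity $|z(\tau')^n|$ as a strictly monotone function of the integer $n$ and to locate the single threshold at which it drops below $\tau-1$. First I would set $f(n)=|z(\tau')^n|=|z|\,|\tau'|^{\,n}$, where $|z|$ denotes the ordinary real absolute value of $z\in\RR$. Since $z\neq 0$ we have $|z|>0$, so $f(n)>0$ for every $n\in\ZZ$. The key elementary fact is that $|\tau'|=\tfrac{\sqrt5-1}{2}=1/\tau<1$; indeed $\tau\tau'=-1$ gives $\tau'=-1/\tau$, and in fact $|\tau'|=\tau-1$ exactly. Consequently $f(n+1)=|\tau'|\,f(n)<f(n)$, so $f$ is strictly decreasing on $\ZZ$.

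Next I would record the two limiting behaviours that force a threshold to exist. Because $|\tau'|<1$ we have $f(n)\to 0$ as $n\to+\infty$, while $f(n)=|z|\,|\tau'|^{\,n}\to+\infty$ as $n\to-\infty$. Hence the set $S=\{n\in\ZZ : f(n)<\tau-1\}$ is nonempty (it contains all sufficiently large $n$) and bounded below (it omits all sufficiently negative $n$, as $\tau-1>0$), so it possesses a least element, which I define to be $n_z=\min S$. I would then deduce the biconditional from monotonicity together with minimality: if $n\geq n_z$ then $f(n)\leq f(n_z)<\tau-1$ because $f$ is decreasing, so $n\in S$; conversely, if $|z(\tau')^n|<\tau-1$, i.e.\ $n\in S$, then $n\geq\min S=n_z$. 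Together these give precisely the claim that $n\geq n_z$ if and only if $|z(\tau')^n|<\tau-1$.

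I do not expect a serious obstacle: the whole argument rests on the single inequality $|\tau'|<1$ and the resulting strict monotonicity of $f$. The only point demanding genuine care is the role of the hypothesis $z\neq 0$, which is exactly what guarantees both that $f(n)>0$ and that $f(n)\to+\infty$ as $n\to-\infty$; without it one would have $f\equiv 0$, the set $S$ would be all of $\ZZ$, and no threshold $n_z$ could exist. A minor subtlety is that equality $f(n)=\tau-1$ can occur (for instance when $z=\tau^{\,n-1}$), but since $S$ is defined with the strict inequality matching the statement, such an $n$ simply falls just below the threshold and the biconditional is unaffected.
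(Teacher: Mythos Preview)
Your argument is correct. The paper itself omits the proof of this lemma, calling it straightforward, and the monotonicity reasoning you give is exactly the mechanism the authors invoke later (in the proof of Lemma~\ref{lem:4.6}) when they write that ``since $|\tau'|<1$, by the properties of exponential functions, $|x||\tau'|^n$ is monotone decreasing.'' So your approach is the intended one, carried out in full.
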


We can now show that every non-trivial solution to Fermat's Last Theorem produces solutions inside the so-called window of the Fibonacci model set.

\begin{lem} \label{lem:4.3}
If $x, y, z \in {\ZZ}[\tau] \backslash \{0\}$ satisfy $x^k + y^k = z^k$ for some $k \geq 2$, then there exists an $n_0$ such that
\begin{align*}
    \begin{cases}
        x_1 = x \cdot \tau'^n \\
        y_1 = y \cdot \tau'^n \\
        z_1 = z \cdot \tau'^n
    \end{cases}
    \in [ -1, \tau - 1)
\end{align*}
and $x_1^k + y_1^k = z_1^k$ for all $n \geq n_0$.
\end{lem}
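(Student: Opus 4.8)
The plan is to decouple the two assertions in the statement: the algebraic identity $x_1^k+y_1^k=z_1^k$ and the membership of each rescaled coordinate in the window $[-1,\tau-1)$. The identity is essentially free. Since $\tau'=1-\tau\in{\ZZ}[\tau]$, each of $x_1=x(\tau')^n$, $y_1=y(\tau')^n$, $z_1=z(\tau')^n$ again lies in ${\ZZ}[\tau]$, and because the $k$-th power pulls the common factor $(\tau')^n$ out of every term, I would simply compute
$$x_1^k+y_1^k=(\tau')^{nk}\bigl(x^k+y^k\bigr)=(\tau')^{nk}z^k=z_1^k.$$
Thus the equation survives the rescaling for \emph{every} $n$, with no restriction; all of the work therefore goes into the window condition.

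For the window membership I would lean directly on Lemma \ref{lem:4.2}. Applying that lemma separately to the three nonzero elements $x$, $y$, $z$ produces thresholds $n_x$, $n_y$, $n_z$ such that $|x(\tau')^n|<\tau-1$ for $n\geq n_x$, and analogously for $y$ and $z$. Setting $n_0=\max\{n_x,n_y,n_z\}$ forces all three inequalities to hold at once as soon as $n\geq n_0$, which is exactly the $n_0$ demanded by the statement. The only bookkeeping here is to remember that Lemma \ref{lem:4.2} governs one element at a time, so the common threshold must be taken as a maximum.

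The last step is to convert the size bound $|w|<\tau-1$, valid for each $w\in\{x_1,y_1,z_1\}$, into membership in the half-open window $[-1,\tau-1)$. The right endpoint is immediate, since $|w|<\tau-1$ gives $w<\tau-1$ strictly, matching the open right end. For the left endpoint I would use that $\tau-1=\tfrac{-1+\sqrt5}{2}<1$, so $-(\tau-1)>-1$, and hence $w>-(\tau-1)>-1$; therefore $w\in\bigl(-(\tau-1),\,\tau-1\bigr)\subseteq[-1,\tau-1)$. I do not expect any genuine obstacle once Lemma \ref{lem:4.2} is available: the entire argument reduces to the factoring identity above plus a threshold-maximum and a short check that the strict bound $|w|<\tau-1$ is compatible with the window at both ends. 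The subtlest point worth stating explicitly is this endpoint compatibility, which works precisely because $\tau-1<1$ keeps the rescaled coordinates safely away from the closed left boundary.
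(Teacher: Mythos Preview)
Your proposal is correct and follows essentially the same route as the paper: apply Lemma~\ref{lem:4.2} to each of $x,y,z$, take $n_0=\max\{n_x,n_y,n_z\}$, observe the inclusion $(-(\tau-1),\tau-1)\subseteq[-1,\tau-1)$, and verify the Diophantine identity by pulling out the common factor $(\tau')^{nk}$. The only cosmetic difference is the order of presentation and your added remark that $\tau'=1-\tau\in{\ZZ}[\tau]$ keeps the rescalings inside the ring.
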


\begin{proof}
Assume $x, y, z \in {\ZZ}[\tau] \backslash \{0\}$ satisfy $x^k + y^k = z^k$ for some $k \geq 2$. 
 
\vspace{.1in}

Then, by Lemma \ref{lem:4.2}, since $z \in {\ZZ}[\tau] \backslash \{0\}$, there exists an $n_z \in {\ZZ}$ such that for all $n \geq n_z$, $|z \cdot (\tau')^n| < \tau-1$. 
\vspace{.1in}

By the same reasoning, since $x,y \in {\ZZ}[\tau] \backslash \{0\}$, there exists $n_x,n_y \in {\ZZ}$ such that $|x \cdot (\tau')^n|<\tau-1$ for all $n \geq n_x$ and $|y \cdot (\tau')^n|<\tau-1$ for all $n \geq n_y$. 
\vspace{.1in}

We then denote the largest of $n_x,n_y,n_z$ as $n_0$. \\
Hence, $|x \cdot (\tau')^n|,|y \cdot (\tau')^n|, |z \cdot (\tau')^n| < \tau-1$ for all $n \geq n_0$. 

\vspace{.1in}
Therefore, for all $n \geq n_0$, 
\begin{align*}
    \begin{cases}
        x_1 = x \cdot \tau'^n \\
        y_1 = y \cdot \tau'^n \\
        z_1 = z \cdot \tau'^n
    \end{cases}
    \in (1-\tau, \tau - 1) \subseteq [-1, \tau-1).
\end{align*}

\vspace{.1in}
Lastly, since $x^k + y^k = z^k$ 
\begin{align*}
&\implies \tau'^{kn} (x^k + y^k) = \tau'^{kn} (z^k) \\
&\implies x^k\tau'^{kn} + y^k\tau'^{kn} = z^k\tau'^{kn} \\
&\implies (x \cdot \tau'^n)^k + (y \cdot \tau'^n)^k = (z \cdot \tau'^n)^k \\
&\implies x_1^k + y_1^k = z_1^k.
\end{align*}

\end{proof}

We can now show that if there is a ${\ZZ}[\tau]$ solution to the Diophantine equation arising in Fermat's Last Theorem then there is a Fibonacci model set solution. 

\begin{theorem} \label{theorem:4.4}
If $a, b, c \in {{\mathbb Z}}[\tau] \backslash \{0\}$ satisfy $a^k+b^k=c^k$ for some $k \geq 2$, then there exists an $n_0$ such that $a \cdot \tau^n, b \cdot \tau^n, c \cdot \tau^n \in \Lambda$, the Fibonacci model set, and $(a \cdot \tau^n)^k+(b \cdot \tau^n)^k=(c \cdot \tau^n)^k$ for all $n \geq n_0$.
\end{theorem}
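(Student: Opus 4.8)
The plan is to reduce Theorem \ref{theorem:4.4} to Lemma \ref{lem:4.3} by passing through the Galois conjugation $\sigma$. The crucial observation is that membership in $\Lambda$ is, by Definition \ref{def:2.3}, a condition on $\sigma$, whereas Lemma \ref{lem:4.3} is phrased in terms of multiplication by powers of $\tau'$; the two are linked precisely because $\sigma(\tau) = \tau'$.

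First I would apply $\sigma$ to the hypothesis. Since $\sigma$ is a ring homomorphism and a bijection of ${\ZZ}[\tau]$, the conjugates $\sigma(a), \sigma(b), \sigma(c)$ again lie in ${\ZZ}[\tau] \backslash \{0\}$ and satisfy $\sigma(a)^k + \sigma(b)^k = \sigma(c)^k$. This allows me to feed $\sigma(a), \sigma(b), \sigma(c)$ into Lemma \ref{lem:4.3}, which yields an $n_0$ such that $\sigma(a)\tau'^n, \sigma(b)\tau'^n, \sigma(c)\tau'^n \in [-1, \tau-1)$ for every $n \geq n_0$.

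Next I would translate this conclusion back to the original triple. Because $\sigma$ is multiplicative and $\sigma(\tau^n) = \sigma(\tau)^n = \tau'^n$, we have $\sigma(a\tau^n) = \sigma(a)\tau'^n$, and similarly for $b$ and $c$. The three quantities produced by Lemma \ref{lem:4.3} are therefore exactly $\sigma(a\tau^n), \sigma(b\tau^n), \sigma(c\tau^n)$, so by Definition \ref{def:2.3} the elements $a\tau^n, b\tau^n, c\tau^n$ all belong to $\Lambda$ for all $n \geq n_0$.

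Finally, the Diophantine identity is immediate: multiplying $a^k + b^k = c^k$ through by $\tau^{kn}$ gives $(a\tau^n)^k + (b\tau^n)^k = (c\tau^n)^k$. I expect no genuine obstacle here, since the analytic content --- the decay of $\tau'^n$ and the existence of the threshold $n_0$ --- has already been absorbed into Lemmas \ref{lem:4.2} and \ref{lem:4.3}. The only point deserving care is the opening bookkeeping: verifying that $\sigma$ carries nonzero elements to nonzero elements and preserves the $k$-th power equation, after which the statement follows mechanically.
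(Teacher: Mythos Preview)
Your proposal is correct and follows essentially the same route as the paper: apply $\sigma$ to obtain a conjugate triple satisfying the same Diophantine equation, invoke Lemma~\ref{lem:4.3} to place $\sigma(a)\tau'^n,\sigma(b)\tau'^n,\sigma(c)\tau'^n$ in the window for all $n\ge n_0$, identify these with $\sigma(a\tau^n),\sigma(b\tau^n),\sigma(c\tau^n)$ via multiplicativity of $\sigma$, and read off membership in $\Lambda$ from Definition~\ref{def:2.3}; the final identity comes from multiplying through by $\tau^{kn}$. The only minor caveat is that the paper treats $\sigma$ as an involution rather than explicitly as a bijection, but this is the same fact and your bookkeeping remark about nonzero elements is exactly what is needed.
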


\begin{proof}
Assume $a, b, c \in {\ZZ}[\tau] \backslash \{0\}$ satisfy $a^k + b^k = c^k$ for some $k \geq 2$. Define $x = \sigma(a), y = \sigma(b), z = \sigma(c)$. 

Thus, $x,y,z \in {\ZZ}[\tau] \backslash \{0\}$ and
\begin{align*}
   x^k + y^k &= \sigma(a)^k + \sigma(b)^k = \sigma(a^k + b^k) \\
    &= \sigma(c^k) = \sigma(c)^k =z^k.
\end{align*}

By Lemma \ref{lem:4.3}, there exists an $n_0 \in {\ZZ}$ such that
\begin{align*}
    \begin{cases}
    x_1=x \cdot \tau'^n \\
    y_1=y \cdot \tau'^n \\
    z_1=z \cdot \tau'^n \\
    \end{cases}
    \in [-1, \tau-1)
\end{align*}
for all $n \geq n_0$. Hence, for every $n \geq n_0$ we have $\sigma(a\tau^n) = x\tau'^n$, $\sigma(b\tau^n) = y\tau'^n$, $\sigma(c\tau^n) = z\tau'^n$. Definition \ref{def:2.3} then gives that $a\tau^n, b\tau^n, c\tau^n$ are in the Fibonacci model set. 

Lastly, $(a\tau^n)^k + (b\tau^n)^k = (a^k + b^k)\tau^{nk} = c^k\tau^{nk} = (c\tau^n)^k$.

\end{proof}

The other direction can also be proven. 

\begin{lem} \label{lem:4.6}
If $x,y,z \in {{\ZZ}}[\tau] \backslash \{0\}$ satisfy $x^k+y^k=z^k$ for some $k \geq 2$, then there exists an $n_1$ such that
\begin{align*}
    \begin{cases}
    x_1= x \cdot \tau'^n \\
    y_1=y \cdot \tau'^n \\
    z_1=z \cdot \tau'^n
    \end{cases}
    \not\in [-1,\tau-1)
\end{align*}
and $x_1^k+y_1^k=z_1^k$
for all $n<n_1$.
\end{lem}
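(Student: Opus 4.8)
The plan is to mirror the proof of Lemma~\ref{lem:4.3}, replacing its use of $|\tau'^n| \to 0$ (as $n \to +\infty$) by the complementary fact that $|\tau'^n| \to \infty$ as $n \to -\infty$. Indeed, $\tau' = \tfrac{1-\sqrt5}{2}$ gives $|\tau'| = \tau - 1 < 1$, so $|\tau'^n| = |\tau'|^n$ is strictly decreasing in $n$ and tends to $+\infty$ as $n$ decreases. The algebraic identity $x_1^k + y_1^k = z_1^k$ is obtained exactly as before, by multiplying $x^k + y^k = z^k$ through by $\tau'^{kn}$ and regrouping, so the real content is the membership claim.

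First I would isolate an ``escape criterion''. Since $\tau - 1 < 1$ we have $[-1, \tau - 1) \subseteq [-1, 1)$, and therefore any real $w$ with $|w| > 1$ satisfies either $w > 1 > \tau - 1$ or $w < -1$; in both cases $w \notin [-1, \tau - 1)$. Hence it is enough to guarantee the strict inequality $|w \cdot \tau'^n| > 1$ for each of $w \in \{x, y, z\}$.

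Next, for each fixed nonzero $w \in \{x, y, z\}$ I would use $|w \cdot \tau'^n| = |w|\,|\tau'|^n \to \infty$ as $n \to -\infty$ to obtain a threshold $m_w \in \ZZ$ with $|w \cdot \tau'^n| > 1$ for all $n < m_w$; this is the mirror image of the threshold supplied by Lemma~\ref{lem:4.2}. Putting $n_1 = \min(m_x, m_y, m_z)$, every $n < n_1$ lies below all three thresholds at once, so $|x_1|, |y_1|, |z_1| > 1$ and the escape criterion places $x_1, y_1, z_1$ outside the window, as required.

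The main obstacle is the asymmetry of the window $[-1, \tau - 1)$. The direct contrapositive of Lemma~\ref{lem:4.2} only yields $|w \cdot \tau'^n| \geq \tau - 1$, which does not suffice: a value such as $-0.7$ has modulus larger than $\tau - 1 \approx 0.618$ yet still lies in $[-1, \tau-1)$, so one cannot leave the interval on its negative side by a modulus bound at the level $\tau - 1$. The escape criterion fixes this by upgrading the target to the strict bound $|w \cdot \tau'^n| > 1$, which costs nothing because $|\tau'^n|$ diverges; this is precisely the step to state carefully.
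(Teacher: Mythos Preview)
Your proposal is correct and follows essentially the same approach as the paper: both arguments find, for each of $x,y,z$, a threshold below which $|w\cdot\tau'^n|>1$, take the minimum of the three thresholds as $n_1$, and obtain the algebraic identity by multiplying through by $\tau'^{kn}$. Your explicit ``escape criterion'' paragraph (that $|w|>1$ forces $w\notin[-1,\tau-1)$ because $[-1,\tau-1)\subseteq[-1,1)$) is in fact more carefully stated than the paper's own proof, which leaves that inclusion implicit.
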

\begin{proof}
Assume $x,y,z \in {{\ZZ}}[\tau] \backslash \{0\}$ satisfy $x^k+y^k=z^k$ for some $k \geq 2$. 
\vspace{.1in}

By the same reasoning as in Lemma \ref{lem:4.2}, there must exist a point $a$ such that $|x||\tau'|^a=1$. Then, since $|\tau'|<1$, by the properties of exponential functions, $|x||\tau'|^n$ is monotone decreasing. Hence, $|x||\tau'|^n>1$ for all $n<a$.
\vspace{.1in}

Next, by the same reasoning, there exist points $b$ and $c$ such that $|y||\tau'|^b=1$ and hence $|y||\tau'|^n>1$ for all $n<b$ and $|z||\tau'|^c=1$ and hence $|z||\tau'|^n>1$ for all $n<c$.
\vspace{.1in}

Now let $n_1$ be an integer such that $n_1 < \text{min}\{a, b,c\}$. We can also define $n_1 = \lfloor \text{min}\{a, b,c \} \rfloor -1$. This implies $|x||\tau'|^n,|y||\tau'|^n, |z||\tau'|^n>1$ for all $n<n_1$. Therefore, there exists an $n_1 \in {\ZZ}$ such that 
\begin{align*}
    \begin{cases}
    x_1= x \cdot \tau'^n \\
    y_1=y \cdot \tau'^n \\
    z_1=z \cdot \tau'^n
    \end{cases}
    \not\in [-1,\tau-1)
\end{align*}
for all $n<n_1$.
\vspace{.1 in}

Lastly, $x^k+y^k=z^k$
\begin{align*}
    &\implies \tau'^{kn}(x^k+y^k)=\tau'^{kn}(z^k) \\
    &\implies (x \cdot \tau'^n)^k+(y \cdot \tau'^n)^k=(z \cdot\tau'^n)^k \\
    &\implies x_1^k+y_1^k=z_1^k
\end{align*}
\end{proof}

We use this to show that every solution to such Diophantine equations in ${\ZZ}[\tau]$ lead to a solution entirely outside the Fibonacci model set. 

\begin{theorem} \label{theorem:4.7}
If $a,b,c \in {{\ZZ}}[\tau] \backslash \{0\}$ satisfy $a^k+b^k=c^k$ for some $k \geq 2$, then there exists an $n_1 \in {\ZZ}$ such that $a \cdot \tau^n, b \cdot \tau^n, c \cdot \tau^n \not\in \Lambda$, the Fibonacci model set, and $(a \cdot \tau^n)^k+(b \cdot \tau^n)^k=(c \cdot \tau^n)^k$ for all $n<n_1$.
\end{theorem}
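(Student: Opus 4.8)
The plan is to mirror the proof of Theorem \ref{theorem:4.4} verbatim, swapping in Lemma \ref{lem:4.6} for Lemma \ref{lem:4.3}. The idea driving the argument is that the Galois conjugation $\sigma$ converts multiplication by $\tau^n$ (the operation appearing in the model-set picture) into multiplication by $\tau'^n$ (the operation controlled by Lemma \ref{lem:4.6}), since $\sigma(\tau) = \tau'$. Because $\sigma$ is an involutive ring homomorphism, it carries a solution of the Diophantine equation to another solution and sends nonzero elements to nonzero elements, so the conjugated triple will satisfy the hypotheses of Lemma \ref{lem:4.6}.

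Concretely, I would begin by setting $x = \sigma(a)$, $y = \sigma(b)$, $z = \sigma(c)$. Since $\sigma$ is an involution, $\sigma(a) = 0$ would force $a = \sigma(\sigma(a)) = 0$; hence $x, y, z \in {\ZZ}[\tau] \backslash \{0\}$. Applying $\sigma$ to $a^k + b^k = c^k$ and using that $\sigma$ respects sums and products yields $x^k + y^k = z^k$. Lemma \ref{lem:4.6} then supplies an integer $n_1 \in {\ZZ}$ such that
\begin{align*}
    \begin{cases}
    x_1 = x \cdot \tau'^n \\
    y_1 = y \cdot \tau'^n \\
    z_1 = z \cdot \tau'^n
    \end{cases}
    \not\in [-1, \tau-1)
\end{align*}
for all $n < n_1$.

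To transport this back to the model set, I would use $\sigma(\tau) = \tau'$ and the multiplicativity of $\sigma$ to compute $\sigma(a\tau^n) = \sigma(a)\sigma(\tau)^n = x\tau'^n = x_1$, and likewise $\sigma(b\tau^n) = y_1$ and $\sigma(c\tau^n) = z_1$. By the display above, these conjugates lie outside $[-1, \tau-1)$, so Definition \ref{def:2.3} gives $a\tau^n, b\tau^n, c\tau^n \not\in \Lambda$ for every $n < n_1$. Finally, the Diophantine relation survives the rescaling because $(a\tau^n)^k + (b\tau^n)^k = (a^k + b^k)\tau^{nk} = c^k\tau^{nk} = (c\tau^n)^k$.

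I do not expect a genuine obstacle here: all of the analytic content — the monotonicity of $|z||\tau'|^n$ and the existence of the threshold $n_1$ — has already been isolated in Lemma \ref{lem:4.6}. The only point deserving a moment's care is the bookkeeping observation that $\sigma$ turns the model-set multiplier $\tau^n$ into the conjugate multiplier $\tau'^n$; once that identity is in hand, the conclusion is immediate from Definition \ref{def:2.3}, and the theorem follows.
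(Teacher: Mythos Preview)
Your proposal is correct and follows essentially the same approach as the paper's own proof: define $x=\sigma(a)$, $y=\sigma(b)$, $z=\sigma(c)$, invoke Lemma \ref{lem:4.6} to obtain $n_1$, and then translate back via $\sigma(a\tau^n)=x\tau'^n$ and Definition \ref{def:2.3}. Your additional justification that $\sigma$ preserves nonzeroness (via the involution property) is a welcome detail the paper leaves implicit.
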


\begin{proof}
Assume $a,b,c \in {{\ZZ}}[\tau] \backslash \{0\}$ satisfy $a^k+b^k=c^k$ for some $k \geq 2$. Define $x=\sigma(a), y=\sigma(b), z=\sigma(c)$.

Thus, $x,y,z \in {\ZZ}[\tau] \backslash \{0\}$ and 
\begin{align*}
   x^k + y^k &= \sigma(a)^k + \sigma(b)^k = \sigma(a^k + b^k) \\
    &= \sigma(c^k) = \sigma(c)^k = z^k.
\end{align*}

By Lemma \ref{lem:4.6}, we know there exists an $n_1 \in {\ZZ}$ such that 
\begin{align*}
    \begin{cases}
    x_1=x \cdot \tau'^n \\
    y_1=y \cdot \tau'^n \\
    z_1=z \cdot \tau'^n \\
    \end{cases}
    \not\in [-1, \tau-1)
\end{align*}
for all $n<n_1$. Hence, for every $n < n_1$ we have $\sigma(a\tau^n) = x\tau'^n$, $\sigma(b\tau^n) = y\tau'^n$, $\sigma(c\tau^n) = z\tau'^n$. Definition \ref{def:2.3} gives that $a \cdot \tau^n, b \cdot \tau^n, c \cdot \tau^n$ are not in the Fibonacci model set. 

Lastly, $(a\tau^n)^k + (b\tau^n)^k = (a^k + b^k)\tau^{nk} = c^k\tau^{nk} = (c\tau^n)^k$.

\end{proof}

We now conclude that every solution to such a Diophantine equation in ${\ZZ}[\tau]$ sits in an infinite set of solutions, given by multiplying by $\tau^n$, $n \in {\ZZ}$. This set is divided into two parts, the right side is in the Fibonacci model set, and the left side is not. 

\begin{theorem} \label{prop:4.8}
Suppose $a,b,c \in {\ZZ} [\tau] \backslash \{0\}$ satisfy $a^k+b^k=c^k$. Then, there exists $N \in {\ZZ}$ such that $a \cdot \tau^n, b \cdot \tau^n, c \cdot \tau^n \in \Lambda$, the Fibonacci model set, if and only if $n \geq N$.
\end{theorem}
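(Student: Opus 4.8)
The plan is to recast the statement as an order-theoretic fact about the set
\[
S=\{\,n\in\ZZ : a\tau^{n},\,b\tau^{n},\,c\tau^{n}\in\Lambda\,\},
\]
namely that $S$ equals a ray $\{n\in\ZZ : n\geq N\}$. Granting this, the asserted equivalence is immediate with $N=\min S$. To prove $S$ is such a ray I would establish three properties: $S$ is nonempty, $S$ is bounded below, and $S$ is closed under adding $1$.

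Nonemptiness and a lower bound come directly from the two preceding theorems. Theorem \ref{theorem:4.4} supplies an $n_0$ with $a\tau^{n},b\tau^{n},c\tau^{n}\in\Lambda$ for every $n\geq n_0$, so $S$ contains the tail $[n_0,\infty)\cap\ZZ$ and is in particular nonempty. Theorem \ref{theorem:4.7} supplies an $n_1$ with $a\tau^{n},b\tau^{n},c\tau^{n}\notin\Lambda$ for every $n<n_1$, so no integer below $n_1$ lies in $S$; hence $S$ is bounded below and $N:=\min S$ exists by well-ordering. (One could instead deduce the lower bound from $|\sigma(a)|\,|\tau'|^{n}\to\infty$ as $n\to-\infty$, but Theorem \ref{theorem:4.7} already records exactly what is needed.)

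The decisive step is upward closure, and this is where Lemma \ref{lem:2.4} carries the argument: since it asserts $x\in\Lambda\implies x\tau\in\Lambda$, applying it to each of $x=a\tau^{n},\,b\tau^{n},\,c\tau^{n}$ shows that membership of all three at stage $n$ forces membership of all three at stage $n+1$, i.e. $n\in S\implies n+1\in S$. Together with $N=\min S$, this yields $\{n:n\geq N\}\subseteq S$ (by iterating upward from $N$) and $S\subseteq\{n:n\geq N\}$ (by minimality), so $S=\{n:n\geq N\}$; the identity $(a\tau^{n})^{k}+(b\tau^{n})^{k}=(c\tau^{n})^{k}$ holds throughout, just as in Theorems \ref{theorem:4.4} and \ref{theorem:4.7}. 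The genuine obstacle is precisely this monotonicity: because $\sigma(x\tau)=\tau'\sigma(x)$ and multiplication by $\tau'$ contracts while reversing sign, it carries the window $[-1,\tau-1)$ into $(\tau-2,\tau-1]$, and the only value threatening to escape $[-1,\tau-1)$ is the right endpoint $\tau-1=-\tau'$, reached exactly when $\sigma(x)=-1$. Confirming that the window is forward-invariant under $\tau'$ away from this single boundary point is the crux, and it is exactly what Lemma \ref{lem:2.4} provides; it is this monotonicity that upgrades the two one-sided results—which on their own leave membership undetermined on the gap $[n_1,n_0)$—into the single sharp threshold $N$.
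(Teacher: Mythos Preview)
Your proof is correct and follows essentially the same approach as the paper: both invoke Theorem~\ref{theorem:4.4} for nonemptiness, Theorem~\ref{theorem:4.7} for a lower bound, take $N$ to be the minimum of the resulting set, and use Lemma~\ref{lem:2.4} for upward closure. Your framing in terms of the set $S$ and its order-theoretic properties is cleaner than the paper's version (which works on the finite interval $\{n_1,\dots,n_0\}$), but the logical content is the same.
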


\begin{proof}
Assume $a,b,c \in {\ZZ}[\tau] \backslash \{0\}$ satisfy $a^k+b^k=c^k$.
\vspace{.1in}

We then know from Theorem \ref{theorem:4.4} that there exists an $n_0 \in {\ZZ}$ such that $a \cdot \tau^n, b \cdot \tau^n, c \cdot \tau^n \in \Lambda$  for all $n \geq n_0$. We also know from Theorem \ref{theorem:4.7} that there exists an $n_1 \in {\ZZ}$ such that $a \cdot \tau^n, b \cdot \tau^n, c \cdot \tau^n \not\in \Lambda$ for all $n<n_1$.
\vspace{.1in}

Next, consider the finite set A=\{$n_1, ... , n_0$\}, and notice that A is nonempty since $n_1 \leq n_0$. There must exist a smallest element $N \in A$ such that  $a \cdot \tau^N, b \cdot \tau^N, c \cdot \tau^N \in \Lambda$.
\vspace{.1in}

Thus, $a \cdot \tau^n, b \cdot \tau^n, c \cdot \tau^n \in \Lambda$ for all $n \geq N$ as multiplying any element in the Fibonacci model set by $\tau$ will give an element that is still in the Fibonacci model set, see Lemma \ref{lem:2.4}.

\vspace{.1in}

Since $N$ is the smallest element in $A$ for which $a \cdot \tau^n, b \cdot \tau^n, c \cdot \tau^n \in \Lambda$ , then by definition, $a \cdot \tau^n, b \cdot \tau^n, \mbox{or }  c \cdot \tau^n \not\in \Lambda$ for all $n_1 \leq n < N$. Therefore, $a \cdot \tau^n, b \cdot \tau^n, c \cdot \tau^n \in \Lambda$ if and only if $n \geq N$.

\end{proof} 

As a consequence we get the following theorem. 
\vspace{.1in}

\begin{theorem} 
~\begin{itemize} 
\item[(a)] $(x,y,z)$ is a Pythagorean triple in the Fibonacci model set if and only if there exists some $l,m,n \in {\ZZ}[\tau]$ such that
\begin{align*}
    \begin{cases}
        x=\pm2 lmn \\
        y= l(m^2-n^2) \\
        z= l(m^2+n^2)
    \end{cases}
\end{align*}
and $\sigma(x), \sigma(y), \sigma(z) \in [-1, \tau-1)$.

\item[(b)] For each $l,m,n \in {\ZZ}[\tau]$ there exists some $N \in {\ZZ}$ such that
\begin{align*}
\begin{cases}
x &= \pm 2 (l \tau^k)mn \\
y &= (l \tau^k)(m^2-n^2) \\
z&= (l \tau^k)(m^2+n^2)
\end{cases}
\end{align*}
is a Pythagorean triple in the Fibonacci model set for all $k > N$.
\end{itemize}
\end{theorem}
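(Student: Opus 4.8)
The plan is to treat this statement as a corollary that assembles the two principal results of the paper: the characterization of Pythagorean triples in $\ZZ[\tau]$ from Section 3, and the window-membership criterion of Definition \ref{def:2.3} together with the limiting argument behind Theorem \ref{theorem:4.4}. Throughout I read ``Pythagorean triple in the Fibonacci model set'' as a triple $(x,y,z)$ with $x^2+y^2=z^2$ and $x,y,z \in \Lambda$.

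For part (a) I would prove both directions by separating the two defining conditions. For the forward direction, suppose $(x,y,z)$ is a Pythagorean triple in $\Lambda$. Since $x^2+y^2=z^2$, the main theorem of Section 3 supplies the parametrization $x=\pm 2lmn$, $y=l(m^2-n^2)$, $z=l(m^2+n^2)$ for some $l,m,n\in\ZZ[\tau]$, up to the interchange of $x$ and $y$ permitted by that theorem (which only amounts to relabelling and does not affect the window condition, since membership in $\Lambda$ is imposed componentwise). Because $x,y,z\in\Lambda$, Definition \ref{def:2.3} immediately yields $\sigma(x),\sigma(y),\sigma(z)\in[-1,\tau-1)$. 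For the converse, given the parametrization the folklore Proposition at the start of Section 3 shows $x^2+y^2=z^2$, so $(x,y,z)$ is a Pythagorean triple, while the hypothesis $\sigma(x),\sigma(y),\sigma(z)\in[-1,\tau-1)$ is exactly the condition for $x,y,z\in\Lambda$ by Definition \ref{def:2.3}. Combining the two gives a Pythagorean triple in the model set.

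For part (b) I would set $a=\pm 2lmn$, $b=l(m^2-n^2)$, $c=l(m^2+n^2)$, so that $a^2+b^2=c^2$ by the folklore Proposition, regardless of whether any of $a,b,c$ vanish. Multiplying this identity by $\tau^{2k}$ shows $(a\tau^k,b\tau^k,c\tau^k)$ satisfies Pythagoras' equation for every $k$, so only the window condition remains. For each nonzero entry I would apply Lemma \ref{lem:4.2} to its Galois conjugate: since $\sigma(a)\in\ZZ[\tau]\setminus\{0\}$ there is a threshold beyond which $|\sigma(a)(\tau')^k|<\tau-1$, and because $\tau-1<1$ this forces $\sigma(a\tau^k)=\sigma(a)(\tau')^k\in(1-\tau,\tau-1)\subseteq[-1,\tau-1)$, i.e. $a\tau^k\in\Lambda$ by Definition \ref{def:2.3}; likewise for $b$ and $c$. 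Any of $a,b,c$ that equals zero lies in $\Lambda$ for every $k$, since $\sigma(0)=0$. Taking $N$ to be the largest of the finitely many thresholds produced by the nonzero components then guarantees $a\tau^k,b\tau^k,c\tau^k\in\Lambda$ simultaneously for all $k>N$, which is the claim. When $a,b,c$ are all nonzero this is precisely Theorem \ref{theorem:4.4} specialized to $k=2$.

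The only real subtleties are bookkeeping rather than mathematical depth. In (a) one must keep in mind that Section 3 produces the stated form only up to swapping $x$ and $y$, which is harmless here. The more genuine obstacle is in (b): the clean hypotheses of Theorem \ref{theorem:4.4} require all three entries to be nonzero, whereas for arbitrary $l,m,n$ any of $2lmn$, $m^2-n^2$, or $m^2+n^2$ may vanish (for instance when $l=0$, $m=n$, or $m=0$). I therefore expect the main effort to lie in handling these degenerate components directly through $\sigma(0)=0\in\Lambda$ and then verifying that a single $N$ works uniformly across all three components, zero and nonzero alike.
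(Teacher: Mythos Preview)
Your proposal is correct and matches the paper's approach: the paper presents this theorem without proof, simply as a consequence of the Section~3 parametrization together with Definition~\ref{def:2.3} and Theorem~\ref{theorem:4.4}/\ref{prop:4.8}, and that is exactly the assembly you carry out. Your treatment is in fact slightly more careful than what the paper's one-line deduction would strictly license, since Theorems~\ref{theorem:4.4} and~\ref{prop:4.8} assume $a,b,c\neq 0$, whereas part~(b) allows arbitrary $l,m,n\in\ZZ[\tau]$; your explicit handling of the zero components via $\sigma(0)=0\in[-1,\tau-1)$ fills that small gap.
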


\subsection{Examples in The Fibonacci Model Set: Pythagoras' Theorem}
\hfill\\
\vspace{0.01cm}

Particular solutions to Pythagoras' equation in ${\ZZ}[\tau]$ and the third degree Diophantine equation in Fermat's Last Theorem are shown below. 

\begin{example}
Let $x = \tau$, $y = 2\tau$, and $z = 2+\tau$.
Then
\begin{align*}
    (\tau)^2 + (2\tau)^2 &= 5\tau^2 = 5(\tau+1) \\
    &= 4 + 4\tau + \tau + 1 = 4 + 4\tau + \tau^2 = (2+\tau)^2.
\end{align*}

In order to give an example in the Fibonacci model set where $\sigma (\cdot)$ falls into the window $[-1, \tau - 1)$ we will multiply $\sigma(x) = \tau', \sigma(y) = 2\tau', \sigma(z) = 2+\tau'$ by $(\tau')^n = \big(\frac{1 - \sqrt{5}}{2}\big)^n$ for some $n \in {\ZZ}$. After testing, we have found that for $n = 2$ the results fall into the window, as shown below.

\begin{align*}
    (\tau')\cdot(\tau')^2 &\simeq -0.236 \\
    (2\tau')\cdot(\tau')^2 &\simeq -0.472 \\
    (2+\tau')\cdot(\tau')^2 &\simeq 0.528.
\end{align*}

Then we have

\begin{align*}
     \big((\tau')\cdot(\tau')^2\big)^2 + \big((2\tau')\cdot(\tau')^2\big)^2 &= \big((2+\tau')\cdot(\tau')^2\big)^2 \\
     \implies (1+2\tau')^2 + (2+4\tau')^2 &= (3+4\tau')^2
\end{align*}

and $(1+2\tau, 2+4\tau, 3+4\tau)$ is an example of a triple in the Fibonacci model set satisfying Pythagoras' equation. Moreover, $\sigma(2+\tau) = 2 + \tau' \notin [-1, \tau-1)$ and so by Theorem \ref{prop:4.8} $((\tau)\tau^n)^2 + ((2\tau)\tau^n)^2 = ((2+\tau)\tau^n)^2$ is in the Fibonacci model set if and only if $n \geq 2$.

This triple is small enough to graph on the Fibonacci model set.

\begin{tikzpicture}[scale=0.6]
\draw[red, very thick] (-10.5,0) -- (-10,0);
\draw[blue, very thick] (-10,0) -- (-8.39,0)node[above, pos=0.5]{$a$};
\draw[blue, very thick] (-8.39,0) -- (-6.78,0)node[above, pos=0.5]{$a$};
\draw[red, very thick] (-6.78,0) -- (-5.78,0)node[above, pos=0.5]{$b$} ;
\draw[blue, very thick] (-5.78,0) -- (-4.16,0)node[above, pos=0.5]{$a$};
\draw[blue, very thick] (-4.16,0) -- (-2.55,0)node[above, pos=0.5]{$a$};
\draw[red, very thick] (-2.55,0) -- (-1.55,0)node[above, pos=0.5]{$b$} ;
\draw[blue, very thick] (-1.55,0) -- (.06,0)node[above, pos=0.5]{$a$};
\draw[red, very thick] (.06,0) -- (1.06,0)node[above, pos=0.5]{$b$} ;
\draw[blue, very thick] (1.06,0) -- (2.67,0)node[above, pos=0.5]{$a$};
\draw[blue, very thick] (2.67,0) -- (4.28,0)node[above, pos=0.5]{$a$};
\draw[red, very thick] (4.28,0) -- (5.28,0)node[above, pos=0.5]{$b$} ;
\draw[blue, very thick] (5.28,0) -- (6.89,0)node[above, pos=0.5]{$a$};
\draw[red, ->, very thick] (6.89,0) -- (7.4,0);
\draw[blue, very thick] (-10,.2) -- (-10,-.2);
\draw[blue, very thick] (-8.39,.2) -- (-8.39,-.2);
\draw[blue, very thick] (-6.78,.2) -- (-6.78,-.2);
\draw[red, very thick] (-5.78,.2) -- (-5.78,-.2);
\draw[blue, very thick] (-4.16,.2) -- (-4.16,-.2);
\draw[blue, very thick] (-2.55,.2) -- (-2.55,-.2);
\draw[red, very thick] (-1.55,.2) -- (-1.55,-.2);
\draw[blue, very thick] (.06,.2) -- (.06,-.2);
\draw[red, very thick] (1.06,.2) -- (1.06,-.2);
\draw[blue, very thick] (2.67,.2) -- (2.67,-.2);
\draw[blue, very thick] (4.28,.2) -- (4.28,-.2);
\draw[red, very thick] (5.28,.2) -- (5.28,-.2);
\draw[blue, very thick] (6.89,.2) -- (6.89,-.2);
\draw[black, very thick] (-8.39,.3) -- (-8.39,-.3)node[above, pos=-0.5]{$0$};
\draw[black, very thick] (-4.16,.3) -- (-4.16,-.3)node[above, pos=-0.5]{$x = 1+2\tau$};
\draw[black, very thick] (.06,.3) -- (.06,-.8)node[below, pos=1.2]{$y = 2+4\tau$};
\draw[black, very thick] (1.06,.8) -- (1.06,-.3)node[above, pos=-.2]{$z = 3+4\tau$};
\end{tikzpicture}

\end{example}

\vspace{.1in}

\begin{example}
A larger example solution to Pythagoras' equation in ${\ZZ}[\tau]$ is as follows:

$$(2+12\tau)^2 + (11+8\tau)^2 = (3+18\tau)^2$$

As in the previous example, multiply $\sigma(x) = 2+12\tau', \sigma(y) = 11+8\tau', \sigma(z) = 3+18\tau'$ by $(\tau')^n = \big(\frac{1 - \sqrt{5}}{2}\big)^n$ for some $n \in {\ZZ}$. After testing, we have found that for $n = 6$ the results fall into the window, as shown below. 

\begin{align*}
    (2+12\tau') \cdot (\tau')^6 &\simeq -0.302 \\
    (11+8\tau') \cdot (\tau')^6 &\simeq 0.338 \\
    (3+18\tau') \cdot (\tau')^6 &\simeq -0.458.
\end{align*}

\vspace{.1in}
Then we have

\begin{align*}
     \big((2+12\tau')\cdot(\tau')^6\big)^2 + \big((11+8\tau')\cdot(\tau')^6\big)^2 &= \big((3+18\tau')\cdot(\tau')^6\big)^2 \\
     \implies (106+172\tau')^2 + (119+192\tau')^2 &= (159+258\tau')^2
\end{align*}

and $(106+172\tau, 119+192\tau, 159+258\tau)$ is another example of a triple in the Fibonacci model set satisfying Pythagoras' equation. 

\end{example}

\vspace{.1in}
\subsection{Fermat's Last Theorem}
\hfill\\
\vspace{0.01cm}

In ${\ZZ}[\tau]$ and the Fibonacci model set Fermat's Last Theorem is not true. In particular, an example for the third power Diophantine equation in ${\ZZ}[\tau]$ is shown below.

\begin{example}
Let $x = 4+3\tau$, $y = 5+6\tau$, and $z = 6+6\tau$. Then,
\begin{align*}
    (4+3\tau)^3 + (5+6\tau)^3 &= 64 + 144\tau + 108\tau^2 + 27\tau^3 + 125 + 450\tau + 540\tau^2 + 216\tau^3 \\
    &= 189 + 594\tau + 648(\tau+1) + 243(2\tau+1) \\ 
    &= 1080 + 1728\tau = 216 + 648\tau + 648\tau^2 + 216\tau^3 \\
    &= (6+6\tau)^3
\end{align*}

As in the previous examples, multiply $\sigma(x) = 4+3\tau', \sigma(y) = 5+6\tau', \sigma(z) = 6+6\tau'$ by $(\tau')^n = \big(\frac{1 - \sqrt{5}}{2}\big)^n$ for some $n \in {\ZZ}$. After testing $n = 1, 2, 3$ we have found that for $n = 3$ the results fall into the window, as shown below.

\begin{align*}
    (4+3\tau')\cdot(\tau')^3 &\simeq -0.507 \\
    (5+6\tau')\cdot(\tau')^3 &\simeq -0.305 \\
    (6+6\tau')\cdot(\tau')^3 &\simeq -0.541.
\end{align*}

Then we have

\begin{align*}
     \big((4+3\tau')\cdot(\tau')^3\big)^3 + \big((5+6\tau')\cdot(\tau')^3\big)^3 &= \big((6+6\tau')\cdot(\tau')^3\big)^3 \\
     \implies (10+17\tau')^3 + (17+28\tau')^3 &= (18+30\tau')^3
\end{align*}

and $(10+17\tau, 17+28\tau, 18+30\tau)$ is an example of a triple in the Fibonacci model set which is a counterexample to Fermat's Last Theorem for the third power. Moreover, $\sigma (4+3\tau) \cdot (\tau')^2=10\tau'+7 \notin [-1, \tau-1)$, and so by Theorem \ref{prop:4.8}, $((4+3\tau)\tau^n)^2+((5+6\tau)\tau^n)^2=((6+6\tau)\tau^n)^2$ is in the Fibonacci model set if and only if $n \geq 3$.
\end{example}
\begin{prop}
 There exist infinitely many nontrivial solutions to $x^3+y^3=z^3$ in the Fibonacci model set. 
\end{prop}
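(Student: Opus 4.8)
The plan is to bootstrap the single explicit cubic solution already exhibited in $\ZZ[\tau]$ into an infinite family lying inside the window, using Theorem \ref{theorem:4.4}. Concretely, I would start from the counterexample $(a,b,c) = (4+3\tau,\, 5+6\tau,\, 6+6\tau)$, which was verified above to satisfy $a^3 + b^3 = c^3$ in $\ZZ[\tau]$ and whose entries all lie in $\ZZ[\tau]\setminus\{0\}$.

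First I would apply Theorem \ref{theorem:4.4} with $k = 3$ to this triple. That theorem produces an $n_0 \in \ZZ$ such that for every $n \geq n_0$ the scaled triple $(a\tau^n, b\tau^n, c\tau^n)$ lies in the Fibonacci model set $\Lambda$ and continues to satisfy $(a\tau^n)^3 + (b\tau^n)^3 = (c\tau^n)^3$; the preceding example in fact shows that one may take $n_0 = 3$. This already furnishes a candidate infinite family indexed by $n \geq n_0$.

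The remaining step is to confirm that this family consists of infinitely many genuinely distinct, nontrivial solutions. Nontriviality is immediate, since $\tau^n \neq 0$ and $a,b,c \neq 0$, so no entry of any triple vanishes. For distinctness I would argue that $n \mapsto a\tau^n$ is injective: because $\tau > 1$, the powers $\tau^{n_0}, \tau^{n_0+1}, \ldots$ form a strictly increasing sequence of positive reals, so the first coordinates $a\tau^n = (4+3\tau)\tau^n$ are pairwise distinct as real numbers. Hence the triples are pairwise distinct, and $\{(a\tau^n, b\tau^n, c\tau^n) : n \geq n_0\}$ is an infinite set of nontrivial solutions to $x^3 + y^3 = z^3$ inside $\Lambda$.

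I do not expect a serious obstacle, as all the machinery is in place; the only point requiring care is the bookkeeping in the last step, namely making precise what \emph{nontrivial} and \emph{distinct} should mean for model-set triples and confirming that scaling by distinct powers of $\tau$ never collapses two triples to the same point of the real line. Because the three base entries differ and $\tau^n$ is strictly monotone, both concerns dissolve. If one wished to exhibit solutions not all related by the common scaling $\tau^n$, one could alternatively multiply $(a,b,c)$ by any fixed $t \in \ZZ[\tau]$ and run Theorem \ref{theorem:4.4} on each resulting base triple, but this refinement is unnecessary for mere infinitude.
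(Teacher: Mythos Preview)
Your proposal is correct and matches the paper's intended argument: the paper states the proposition without an explicit proof, immediately after the cubic example and the invocation of Theorem~\ref{prop:4.8}, so the implicit reasoning is precisely to scale the base triple $(4+3\tau,\,5+6\tau,\,6+6\tau)$ by $\tau^n$ for all $n\geq 3$ and appeal to Theorem~\ref{theorem:4.4} (or equivalently Theorem~\ref{prop:4.8}). Your added verification of distinctness and nontriviality simply makes explicit what the paper leaves to the reader.
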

 \medskip

The authors used computer algorithms to check for counterexamples to Fermat's Last Theorem in ${\ZZ}[\tau]$ in the fourth and fifth powers. Each program tested all integer coefficients of $x, y, z \in {\ZZ}[\tau]$ with the bounds $[-100, 100]$, and no solutions were produced for either power. 

\section*{Acknowledgements}

This work was supported by the Level UP program with funding from the Government of Canada, and the authors are grateful for the support. The authors would also like to extend their gratitude to Chris Ramsey and Nicolae Strungaru for their supervision.

\end{document}